\numberwithin{equation}{section}
\theoremstyle{plain}
\newtheorem{theorem}{Theorem}[section]
\newtheorem{lemma}[theorem]{Lemma}
\newtheorem{proposition}[theorem]{Proposition}
\newtheorem{corollary}[theorem]{Corollary}
\theoremstyle{definition}
\newcommand{\meet}{\wedge}
\newcommand{\join}{\vee}
\newcommand{\ovr}{\slash}
\newcommand{\under}{\backslash}
\newcommand{\m}{\mathbf}
\renewcommand{\epsilon}{\varepsilon}
\DeclareMathOperator{\op}{op}
\begin{document}


\title[Distributive laws in residuated binars]{Distributive laws in residuated binars}

\author[Wesley Fussner]{Wesley Fussner}
\address{Department of Mathematics\\
University of Denver\\Denver, Colorado, USA}
\email{wesley.fussner@du.edu}

\author[Peter Jipsen]{Peter Jipsen}
\address{Department of Mathematics\\ Chapman University\\Orange, California, USA}
\email{jipsen@chapman.edu}


\subjclass{06F05, 03G10, 08B15}

\keywords{Residuated lattices, residuated binars, residuation, subvariety lattices}

\begin{abstract}
In residuated binars there are six non-obvious distributivity identities of
$\cdot,\ovr,\under$ over $\wedge, \vee$. We show that in residuated binars with distributive lattice reducts there are some dependencies among these identities; specifically, there are six pairs of identities that imply another one of these identities, and we provide counterexamples to show that no other dependencies exist among these.
\end{abstract}

\maketitle


\section{Introduction}\label{sec:intro}

A \emph{residuated binar} is an algebra ${\m A}=(A,\meet,\join,\cdot,\under,\ovr)$, where $(A,\meet,\join)$ is a lattice, $\cdot$ is a binary operation on $A$, and for all $x,y,z\in A$,
$$x\cdot y\leq z \iff x\leq z\ovr y \iff y\leq x\under z.$$
A \emph{residuated semigroup} is a residuated binar for which $\cdot$ is associative, and a residuated binar possessing an identity element $e$ for $\cdot$ is called \emph{unital}. An expansion of a unital residuated semigroup by a constant designating the identity is called a \emph{residuated lattice} \cite{GJKO}. All of the aforementioned algebras satisfy the distributive laws\footnote{Here and throughout, to reduce the need for parentheses we assume that $\cdot$ has priority over $\under,\ovr$, which in turn have priority over $\meet,\join$. We also write $x\cdot y$ as $xy$.}
\begin{equation}\label{eq:fj}
x (y\join z) = x y\join x z \tag*{$(\cdot\join)$}
\end{equation}
\begin{equation}\label{eq:jf}
(x\join y) z = x z\join y z \tag*{$(\join\cdot)$}
\end{equation}
\begin{equation}\label{eq:lm}
x\under (y\meet z) = x\under y\meet x\under z \tag*{$(\under\meet)$}
\end{equation}
\begin{equation}\label{eq:mr}
(x\meet y)\ovr z = x\ovr z\meet y\ovr z \tag*{$(\meet\ovr)$}
\end{equation}
\begin{equation}\label{eq:rj}
x\ovr (y\join z) = x\ovr y\meet x\ovr z \tag*{$(\ovr\join)$}
\end{equation}
\begin{equation}\label{eq:jl}
(x\join y)\under z = x\under z\meet y\under z \tag*{$(\join\under)$}
\end{equation}
However, in general neither lattice distributivity nor any of the equations
\begin{equation}\label{eq:fm}
x (y\meet z) = x y\meet x z \tag*{$(\cdot\meet)$}
\end{equation}
\begin{equation}\label{eq:mf}
(x\meet y) z = x z\meet y z \tag*{$(\meet\cdot)$}
\end{equation}
\begin{equation}\label{eq:lj}
x\under (y\join z) = x\under y\join x\under z \tag*{$(\under\join)$}
\end{equation}
\begin{equation}\label{eq:jr}
(x\join y)\ovr z = x\ovr z\join y\ovr z \tag*{$(\join\ovr)$}
\end{equation}
\begin{equation}\label{eq:ml}
(x\meet y)\under z = x\under z\join y\under z \tag*{$(\meet\under)$}
\end{equation}
\begin{equation}\label{eq:rm}
x\ovr (y\meet z) = x\ovr y\join x\ovr z \tag*{$(\ovr\meet)$}
\end{equation}
hold in these algebras.

If $t$ is a term in the language of residuated binars (or residuated semigroups), then the \emph{opposite of $t$} is the term $t^{\op}$ defined recursively as follows. For $x$ a variable, set $x^{\op}=x$, and if $s$ and $t$ are terms then set \mbox{$(s\cdot t)^{\op}=t^{\op}\cdot s^{\op}$}, \mbox{$(s\ovr t)^{\op}=t^{\op}\under s^{\op}$}, $(s\under t)^{\op}=t^{\op}\ovr s^{\op}$, $(s\meet t)^{\op}=t^{\op}\meet s^{\op}$, and $(s\join t)^{\op}=t^{\op}\join s^{\op}$ (and $e^{\op}=e$ in the presence of a multiplicative identity $e$). The opposite of an equation $s=t$ is defined by $(s=t)^{\op}=(s^{\op} = t^{\op})$. \emph{Mirror duality} for residuated binars provides that an equation $\epsilon$ holds in the variety of all residuated binars if and only if $\epsilon^{\op}$ does as well. If $\Sigma\cup\{\epsilon\}$ is a set of equations in the language of residuated binars and $\Sigma^{\op} = \{\sigma^{\op} : \sigma\in\Sigma\}$, then $\Sigma\models\epsilon$ holds in the variety of residuated binars if and only if $\Sigma^{\op}\models \epsilon^{\op}$ holds. Observe that \ref{eq:fm}$^{\op}$, \ref{eq:lj}$^{\op}$, and \ref{eq:ml}$^{\op}$ are respectively \ref{eq:mf}, \ref{eq:jr}, and \ref{eq:rm}.

In the presence of a multiplicative identity $e$, left and right prelinearity
\begin{equation}\label{eq:lp}
e\leq x\under y\join y\under x \tag*{$(lp)$}
\end{equation}
\begin{equation}\label{eq:rp}
e\leq x\ovr y\join y\ovr x \tag*{$(rp)$},
\end{equation}
have a connection to the six nontrivial distributive laws given above. In particular, \cite[Proposition 6.10]{BT2003} shows that in residuated lattices satisfying $e$-distributivity
\begin{equation}\label{eq:ed}
(x\join y)\meet e = (x\meet e)\join (y\meet e),\tag*{$(ed)$}
\end{equation}
the equations \ref{eq:lp}, \ref{eq:ml}, and \ref{eq:lj} are pairwise equivalent, as are the equations \ref{eq:rp}, \ref{eq:rm}, and \ref{eq:jr}. Because \ref{eq:lp} and \ref{eq:rp} axiomatize semilinear residuated lattices (i.e., those that are subdirect products of totally-ordered residuated lattices) under appropriate technical hypotheses (see \cite{BT2003}), this provides one explanation of the well-known fact that all six nontrivial distributive laws hold in semilinear residuated lattices. However, a residuated lattice may satisfy all six nontrivial distributive laws even though it is not semilinear (this is the case, e.g., in lattice-ordered groups).

The dependencies among the six nontrivial distributive laws are more complicated in the absence of a multiplicative identity. Sections \ref{sec:implications} and \ref{sec:countermodels} provide a complete description of the dependencies among the nontrivial distributive laws under the hypothesis of lattice distributivity, both for residuated binars and residuated semigroups. Section \ref{sec:additional properties} provides some additional implications among the distributive laws in unital residuated binars, and in the presence of lattice complements. We conclude in Section \ref{sec:open problems} by proposing some open problems.


\section{Implications among the nontrivial distributive laws}\label{sec:implications}

A residuated binar with a distributive lattice reduct may be associated with its \emph{frame}. The frame of a lattice-distributive residuated binar $\m A$ may be obtained by taking the poset of prime filters of the lattice reduct of $\m A$ and endowing it with a ternary relation $R$ defined by
$$R(F,G,H) \iff F\subseteq G\cdot H,$$
where  $F\cdot G = \{xy : x\in G, y\in H\}$ is the complex product of $F$ and $G$. Observe that the ternary relation $R$ on the frame of a residuated binar is antitone in its first coordinate and isotone in its second and third coordinates.

Satisfaction of either of the identities \ref{eq:lj} and \ref{eq:jr} has significant consequences for the frame of a lattice-distributive residuated binar \cite{FP2018}, and the nontrivial distributive laws may be profitably analyzed from the point of view of frames. In fact, for lattice-distributive residuated binars, each of the distributive laws introduced in the previous section may be rendered in terms of an equivalent first-order condition on the corresponding frames by application of ALBA \cite{CP2012}. For instance, the identity \ref{eq:jr} is equivalent to the condition that for all $x,y,p,q,j$,
$$[R(x,j,p) \;\&\; R(y,j,q)]\implies \exists z [x,y\leq z \;\&\; (R(z,j,p) \text{ or } R(z,j,q))].$$
On the other hand, \ref{eq:ml} is equivalent to the condition that for all $x,y,p,q,j$,
$$[R(p,x,j) \;\&\; R(q,y,j)]\implies \exists z [z\leq x,y\;\&\; (R(p,z,j)\text{ or }R(q,z,j))],$$
whereas \ref{eq:lj} is equivalent to the condition that for all $x,y,p,q,j$,
$$[R(x,p,j) \;\&\; R(y,q,j)]\implies \exists z [x,y\leq z\;\&\; (R(z,p,j)\text{ or }R(z,q,j))].$$

\begin{proposition}\label{prop:jr and ml implies lj frame}
Let $\m A$ be a residuated binar with a distributive lattice reduct. If $\m A$ satisfies both \ref{eq:jr} and \ref{eq:ml}, then $\m A$ also satisfies \ref{eq:lj}.
\end{proposition}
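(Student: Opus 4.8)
The plan is to work entirely on the frame of $\m A$ and to use the first-order conditions on the ternary relation $R$ recorded just above. Since $\m A$ has a distributive lattice reduct, it satisfies \ref{eq:jr}, \ref{eq:ml}, and \ref{eq:lj} precisely when its frame satisfies the respective conditions on $R$; hence it suffices to derive the frame condition for \ref{eq:lj} from those for \ref{eq:jr} and \ref{eq:ml}. Throughout I would use repeatedly that $R$ is antitone in its first coordinate and isotone in its second and third.

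So suppose $R(x,p,j)$ and $R(y,q,j)$; the goal is to produce $z\geq x,y$ with $R(z,p,j)$ or $R(z,q,j)$. First I would apply the frame condition for \ref{eq:ml} to these two relations, which is legitimate because they share the third coordinate $j$: reading $x,y$ as first coordinates and $p,q$ as second coordinates, \ref{eq:ml} returns an element $w$ with $w\leq p,q$ and either $R(x,w,j)$ or $R(y,w,j)$. This is the step that fuses the two hypotheses together, since \ref{eq:ml} is exactly the instrument that takes two relations with a common third coordinate and drives their second coordinates down to a common lower bound. In the first case, isotonicity in the second coordinate together with $w\leq q$ gives $R(x,q,j)$, so that $R(x,q,j)$ and the hypothesis $R(y,q,j)$ now share the second coordinate $q$; applying the frame condition for \ref{eq:jr} to this pair yields $z\geq x,y$ with $R(z,q,j)$, one of the required disjuncts. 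The second case is symmetric, using $w\leq p$ to obtain $R(y,p,j)$ and then \ref{eq:jr} to reach $R(z,p,j)$.

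I expect the crux to be conceptual rather than computational. Both \ref{eq:jr} and \ref{eq:lj} conclude with an upper bound $z$ of $x,y$, but \ref{eq:jr} holds the second coordinate fixed whereas \ref{eq:lj} holds the third coordinate fixed, so \ref{eq:jr} cannot be invoked on the hypotheses directly; the genuine work lies in seeing that \ref{eq:ml} is what repairs this coordinate mismatch by collapsing the two distinct second coordinates $p,q$ to a common value, after which monotonicity realigns them and \ref{eq:jr} finishes. By contrast, a direct equational derivation looks unpromising, because the right-hand side of \ref{eq:lj} is a join of two $\under$-terms sharing their first argument, a shape that neither \ref{eq:jr} nor \ref{eq:ml} produces upon rewriting.
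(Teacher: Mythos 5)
Your argument is correct and is essentially identical to the paper's proof: both apply the frame condition for \ref{eq:ml} to produce a common lower bound of the second coordinates, use isotonicity of $R$ in its second coordinate to realign the two hypotheses, and then finish with the frame condition for \ref{eq:jr}, with the same two-case analysis. No gaps.
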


\begin{proof}
Suppose that both \ref{eq:jr} and \ref{eq:ml} hold. We use the equivalent frame conditions to verify \ref{eq:lj}, so suppose that $x,y,p,q,j$ are points in the frame of $\m A$ such that $R(x,p,j)$ and $R(y,q,j)$. By the frame condition for \ref{eq:ml} there exists $z'$ with $z'\leq p,q$ and one of $R(x,z',j)$ or $R(y,z',j)$. Suppose first that $R(x,z',j)$ holds. Then $R(x,z',j)$ and $R(y,q,j)$, and by monotonicity and $z'\leq q$ we have $R(x,q,j)$ and $R(y,q,j)$. Using the frame condition for \ref{eq:jr} we obtain $z$ such that $z,y\leq z$ and $R(z,q,j)$. On the other hand, if $R(y,z',j)$ holds then $R(y,z',j)$ and $R(x,p,j)$. Monotonicity and $z'\leq p$ then gives $R(y,p,j)$ and $R(x,p,j)$, and by the frame condition for \ref{eq:jr} there exists $z$ with $x,y\leq z$ and $R(z,p,j)$. In either case, there exists $z$ with $x,y\leq z$ and either $R(z,p,j)$ or $R(z,q,j)$, which completes the proof.
\end{proof}

Other results of this kind may be discovered by appealing to equivalent conditions on frames. However, an entirely algebraic treatment is also possible. The next lemma is an important step in this.

\begin{lemma}\label{lem:four variables}
Each of the following gives a pair of identities that are equivalent in residuated binars.
\begin{enumerate}
\item \ref{eq:fm} and $xz\meet yw\leq (x\join y)(z\meet w)$.
\item \ref{eq:mf} and $xz\meet yw\leq (x\meet y)(z\join w)$.
\item \ref{eq:lj} and $(x\join y)\under (z\join w) \leq x\under z \join y\under w$.
\item \ref{eq:jr} and $(z\join w)\ovr (x\join y) \leq z\ovr x\join w\ovr y$.
\item \ref{eq:ml} and $(x\meet y)\under (z\meet w)\leq x\under z\join y\under w$.
\item \ref{eq:rm} and $(z\meet w)\ovr (x\meet y)\leq z\ovr x\join w\ovr y$.
\end{enumerate}
\end{lemma}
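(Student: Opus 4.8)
The plan is to prove items (1), (3), and (5) outright and to obtain (2), (4), and (6) from them by mirror duality. Since \ref{eq:fm}$^{\op}$, \ref{eq:lj}$^{\op}$, and \ref{eq:ml}$^{\op}$ are \ref{eq:mf}, \ref{eq:jr}, and \ref{eq:rm} respectively, it suffices to check that the four-variable inequality of each odd-numbered item is the opposite of the one in the following even-numbered item; for instance, the opposite of $(x\join y)\under(z\join w)\leq x\under z\join y\under w$ is, after relabelling, $(z\join w)\ovr(x\join y)\leq z\ovr x\join w\ovr y$, the inequality appearing in (4). Because $\Sigma\models\epsilon$ if and only if $\Sigma^{\op}\models\epsilon^{\op}$, each of the two entailments making up the equivalence in an even-numbered item follows from the corresponding entailment directly above it.

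For each of (1), (3), and (5) I would argue the two directions separately. The implication from the four-variable inequality to the distributive law is a specialization. In every residuated binar, monotonicity of $\cdot$ and the isotone/antitone behavior of the residuals already supply one direction of each nontrivial law for free, namely $x(y\meet z)\leq xy\meet xz$, $\ x\under y\join x\under z\leq x\under(y\join z)$, and $x\under z\join y\under z\leq(x\meet y)\under z$. For the reverse (hence nontrivial) inequalities I would substitute $y:=x$ into the four-variable inequalities of (1) and (3) and $w:=z$ into that of (5); after the simplifications $x\join x=x$, $x\meet x=x$, and $z\meet z=z$ these collapse to exactly the nontrivial halves of \ref{eq:fm}, \ref{eq:lj}, and \ref{eq:ml}, and together with the free inequalities they yield the full identities.

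The subtler implication is from the distributive law to the four-variable inequality, and here the order in which one rewrites is decisive. Take (3) as representative: both \ref{eq:lj} (the assumed law) and \ref{eq:jl} (valid in every residuated binar) can expand $(x\join y)\under(z\join w)$, but only one order works. Rewriting by \ref{eq:jl} first produces $(x\under z\join x\under w)\meet(y\under z\join y\under w)$, reducing the goal to $(x\under z\join x\under w)\meet(y\under z\join y\under w)\leq x\under z\join y\under w$, which is false in a general non-distributive lattice. Rewriting by \ref{eq:lj} first, with first argument $x\join y$, instead gives $(x\join y)\under z\join(x\join y)\under w$, and applying \ref{eq:jl} to each summand yields $(x\under z\meet y\under z)\join(x\under w\meet y\under w)$; since the first summand is $\leq x\under z$ and the second is $\leq y\under w$, monotonicity alone gives $\leq x\under z\join y\under w$. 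The same maneuver handles (1) --- apply \ref{eq:fm} then \ref{eq:jf} to reach $(xz\join yz)\meet(xw\join yw)\geq xz\meet yw$ --- and (5) --- apply \ref{eq:ml} then \ref{eq:lm} to reach $(x\under z\meet x\under w)\join(y\under z\meet y\under w)\leq x\under z\join y\under w$. The main obstacle is exactly this sequencing: with lattice distributivity unavailable, one must order the distributive law and its free companion so that the resulting join-of-meets (or meet-of-joins) collapses to the target purely by monotonicity, leaving no cross terms to estimate.
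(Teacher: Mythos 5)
Your proof is correct and follows essentially the same route as the paper's: the specializations ($y:=x$ in (1) and (3), $w:=z$ in (5)) combined with the free monotonicity inequalities give one direction, and expanding $(x\join y)(z\meet w)$, $(x\join y)\under(z\join w)$, $(x\meet y)\under(z\meet w)$ via the assumed law followed by monotonicity (or the always-valid companion laws) gives the other, with the even-numbered items obtained by mirror duality just as the paper's ``symmetric argument.'' The extra rewriting step via \ref{eq:jf}, \ref{eq:jl}, \ref{eq:lm} is harmless, and your explicit check that the four-variable inequalities in (4) and (6) are the opposites of those in (3) and (5) is a welcome detail the paper leaves implicit.
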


\begin{proof}
We prove (1) and (3); (2) and (4) follow by a symmetric argument, and (5) and (6) follow by a proof similar to (3) and (4).

For (1), note that if $xz\meet yw\leq (x\join y)(z\meet w)$ holds then by instantiating $y=x$ we obtain $xz\meet xw\leq x(z\meet w)$. The reverse inequality follows from the isotonicity of multiplication, so \ref{eq:fm} holds. Conversely, if \ref{eq:fm} holds then we have $xz\meet yw\leq (x\join y)z\meet (x\join y)w = (x\join y)(z\meet w)$.

For (3), taking $y=x$ in the inequality $(x\join y)\under (z\join w)\leq x\under z\join y\under w$ gives \mbox{$x\under (z\join w)\leq x\under z\join x\under w$.} The reverse inequality holds because $\under$ is isotone in its numerator, whence \ref{eq:lj} holds. For the converse, note that \ref{eq:lj} implies $(x\join y)\under (z\join w)=(x\join y)\under z\join (x\join y)\under w\leq x\under z\join y\under w$, where the last step follows because $\under$ is antitone in its denominator.
\end{proof}

\begin{theorem}\label{thm:algebraic implications}
Let $\m A$ be a residuated binar with a distributive lattice reduct. Then:
\begin{enumerate}
\item If $\m A$ satisfies both \ref{eq:jr} and \ref{eq:ml}, then $\m A$ also satisfies \ref{eq:lj}.
\item If $\m A$ satisfies both \ref{eq:lj} and \ref{eq:rm}, then $\m A$ also satisfies \ref{eq:jr}.
\item If $\m A$ satisfies both \ref{eq:fm} and \ref{eq:jr}, then $\m A$ also satisfies \ref{eq:rm}.
\item If $\m A$ satisfies both \ref{eq:mf} and \ref{eq:lj}, then $\m A$ also satisfies \ref{eq:ml}.
\item If $\m A$ satisfies both \ref{eq:ml} and \ref{eq:fm}, then $\m A$ also satisfies \ref{eq:mf}.
\item If $\m A$ satisfies both \ref{eq:rm} and \ref{eq:mf}, then $\m A$ also satisfies \ref{eq:fm}.
\end{enumerate}
\end{theorem}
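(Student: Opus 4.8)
The plan is to combine two reductions. First, mirror duality collapses the six implications into three. Since the opposite of \ref{eq:fm} is \ref{eq:mf}, that of \ref{eq:lj} is \ref{eq:jr}, and that of \ref{eq:ml} is \ref{eq:rm}, and since lattice distributivity is its own opposite (so the class of lattice-distributive residuated binars is closed under the opposite construction), taking opposites carries (1) to (2), (3) to (4), and (5) to (6). It therefore suffices to establish (1), (3), and (5), after which the remaining three follow by mirror duality. Statement (1) is exactly Proposition \ref{prop:jr and ml implies lj frame}, so the genuine new work lies in (3) and (5), for which I would give a purely algebraic argument built on Lemma \ref{lem:four variables}.

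Second, for each target identity I would pass to its four-variable reformulation from Lemma \ref{lem:four variables} and then to the equivalent residuated statement. For example, \ref{eq:lj} becomes: from $at\le c\join d$ and $bt\le c\join d$, conclude $t\le a\under c\join b\under d$; while \ref{eq:ml} becomes: from $(a\meet b)s\le c$ and $(a\meet b)s\le d$, conclude $s\le a\under c\join b\under d$. The template I would follow is uniform: introduce the witness supplied by residuation (for (3), take $A=x\ovr(y\meet z)$, so that $A(y\meet z)\le x$), use the multiplicative hypothesis \ref{eq:fm} to rewrite a product of a meet as a meet of products (giving $Ay\meet Az\le x$), and then use the join-distributive residual hypothesis together with lattice distributivity to split the witness $A$ and land inside the required join $x\ovr y\join x\ovr z$.

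For (5) the shape is slightly different, because the conclusion \ref{eq:mf} asserts that a product is \emph{large}, namely $xz\meet yz\le (x\meet y)z$. This direction is invisible to residuation, which only ever certifies that products are small, so the engine must be \ref{eq:fm}, the one hypothesis of the same ``product is large'' type, whereas \ref{eq:ml} serves to transfer the common factor from the right-hand to the left-hand side of the product. I would organize this as a calculation with the four-variable inequalities $xz\meet yw\le(x\join y)(z\meet w)$ and $(x\meet y)\under(z\meet w)\le x\under z\join y\under w$, recombined through distributivity of the lattice.

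The main obstacle is the same in every case, and it is precisely the step residuation cannot perform: passing from a \emph{joint} meet-bound such as $Ay\meet Az\le x$ to a \emph{disjunctive} conclusion such as $A\le x\ovr y\join x\ovr z$. The two hypotheses must be made to interlock so that lattice distributivity can decompose $A$ into two pieces, one absorbed by each summand; this distributive splitting is the algebraic shadow of the existential witness produced in the frame proof of Proposition \ref{prop:jr and ml implies lj frame}, and identifying the correct decomposition (and verifying that the hypotheses cooperate) is where the argument is most delicate. Once the splitting is carried out for (3) and (5), mirror duality immediately yields (4) and (6), completing all six dependencies.
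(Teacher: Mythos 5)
Your overall architecture is sound and in fact matches the paper's: mirror duality correctly collapses the six claims to (1), (3), (5) (since \ref{eq:fm}$^{\op}$, \ref{eq:lj}$^{\op}$, \ref{eq:ml}$^{\op}$ are \ref{eq:mf}, \ref{eq:jr}, \ref{eq:rm}, and lattice distributivity is self-opposite), citing Proposition \ref{prop:jr and ml implies lj frame} for (1) is legitimate, and the four-variable reformulations from Lemma \ref{lem:four variables} are exactly the right entry point. The problem is that the proposal stops precisely at the step that constitutes the proof. For both (3) and (5) you describe the need to ``split the witness'' by lattice distributivity and to make the two hypotheses ``interlock,'' and you explicitly defer ``identifying the correct decomposition (and verifying that the hypotheses cooperate)'' as the delicate part. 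That decomposition, and the lattice computation that recombines the pieces, are the entire mathematical content of these implications; without them nothing has been established.

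The gap is not merely clerical, because the most obvious instantiation of your template for (3) does not close. From $u\leq x\ovr (y\meet z)$ and \ref{eq:fm} you get $uy\meet uz\leq x$, hence $x=(x\join uy)\meet (x\join uz)$ by distributivity, and \ref{eq:jr} turns each factor of $(x\join uy)\ovr y\meet (x\join uz)\ovr z$ into a join; but distributing the resulting meet of joins leaves the cross term $uy\ovr y\meet uz\ovr z$, which does not obviously lie below $x\ovr y\join x\ovr z$. The argument that actually works (as in the paper's proofs of (1) and (5)) splits different elements: for (1), one writes $x=x_1\join x_2$ and $y=y_1\join y_2$ with $x_1=x\meet (z\ovr u)$, $x_2=x\meet(w\ovr u)$, etc., applies the four-variable form of the second hypothesis to the meets $x_1\meet y_2$ and $x_2\meet y_1$ to obtain four inequalities of the form $u\leq x_i\under z\join y_j\under w$, and then collapses the meet of those four joins by a further distributivity computation; the proof of (5) requires an analogous but differently shaped calculation using Lemma \ref{lem:four variables}(1) on $(x\join y)(z_2\meet w_1)$. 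Until you exhibit the corresponding decompositions for (3) and (5) and carry out the final lattice calculations, the proof is incomplete.
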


\begin{proof}
We provide proofs for (1) and (5); (2) and (6) follow by mirror duality. The others follow similarly.

For (1), suppose that $u\leq (x\join y)\under (z\join w)$. Then by residuation we get $x,y\leq x\join y\leq (z\join w)\ovr u$, and by \ref{eq:jr} we have $x\leq z\ovr u\join w\ovr u$ and also $y\leq z\ovr u\join w\ovr u$. Observe that $x= x\meet (z\ovr u \join w\ovr u)$ and $y= y\meet (z\ovr u \join w\ovr u)$, and by distributivity we obtain that $x=x_1\join x_2$ and $y=y_1\join y_2$, where
$$x_1=x\meet (z\ovr u),$$
$$x_2=x\meet (w\ovr u),$$
$$y_1=y\meet (z\ovr u),$$
$$y_2=y\meet (w\ovr u).$$
Note that
$$x_1\leq z\ovr u\implies u\leq x_1\under z\leq (x_1\meet y_2)\under z,$$
$$x_2\leq w\ovr u\implies u\leq x_2\under w\leq (x_2\meet y_1)\under w,$$
$$y_1\leq z\ovr u\implies u\leq y_1\under z\leq (x_2\meet y_1)\under z,$$
$$y_2\leq w\ovr u\implies u\leq y_2\under w\leq (x_1\meet y_2)\under w.$$
Hence we get that $u\leq (x_1\meet y_2)\under (z\meet w)\leq x_1\under z\join y_2\under w$ and likewise \mbox{$u\leq (x_2\meet y_1)\under (z\meet w)\leq x_2\under z\join y_1\under w$}. Also, $u\leq x_1\under z\leq x_1\under z\join y_1\under w$ and $u\leq y_2\under w\leq x_2\under z\join y_2\under w$. This implies that:
\begin{align*}
u &\leq (x_1\under z\join y_2\under w)\meet (x_2\under z\join y_1\under w)\meet (x_1\under z\join y_1\under w)\meet (x_2\under z\join y_2\under w)\\
&= ((x_2\under z\meet x_1\under z)\join y_1\under w)\meet ((x_1\under z\meet x_2\under z)\join y_2\under w)\\
&= (x_1\under z\meet x_2\under z)\join (y_1\under w\meet y_2\under w)\\
&= (x_1\join x_2)\under z \join (y_1\join y_2)\under w\\
&= x\under z\join y\under w.
\end{align*}
This proves that $(x\join y)\under (z\join w)\leq x\under z\join y\under w$, whence (1) follows by Lemma \ref{lem:four variables}(3).

To prove (5), suppose that $(x\meet y)(z\join w)\leq u$. By residuating and \ref{eq:ml}, we obtain $z,w\leq z\join w\leq (x\meet y)\under u = x\under u\join y\under u$. Define
$$z_1=z\meet (x\under u),$$
$$z_2=z\meet (y\under u),$$
$$w_1=w\meet (x\under u),$$
$$w_2=w\meet (y\under u),$$
and note that by the distributivity of the lattice reduct we have $z=z_1\join z_2$ an $w=w_1\join w_2$. This provides
$$z_1\leq x\under u \implies xz_1\leq u,$$
$$z_2\leq y\under u \implies yz_2\leq u,$$
$$w_1\leq x\under u \implies xw_1\leq u,$$
$$w_2\leq y\under u\implies yw_2\leq u,$$
whence from the isotonicity of multiplication and the middle two items above, we obtain that $y(z_2\meet w_1)\leq u$ and $x(z_2\meet w_1)\leq u$. This provides that $(x\join y)(z_2\meet w_1)=x(z_2\meet w_1)\join y(z_2\meet w_1)\leq u$, and from the assumption \ref{eq:fm} and Lemma \ref{lem:four variables}(1) we conclude that $xz_2\meet yw_1\leq u$. Now note that
\begin{align*}
xz\meet yw &= x(z_1\join z_2)\meet y(w_1\join w_2)\\
&= (xz_1\join xz_2)\meet (yw_1\join yw_2)\\
&= (xz_1\meet yw_1)\join (xz_1\meet yw_2)\join (xz_2\meet yw_1)\join (xz_2\meet yw_2)\\
&\leq u,
\end{align*}
where the third equation above follow from lattice distributivity. It follows that $xz\meet yw\leq (x\meet y)(z\join w)$, so \ref{eq:mf} follows by Lemma \ref{lem:four variables}(2). This gives (5).
\end{proof}
The implications articulated in Theorem \ref{thm:algebraic implications} are described by the directed graph in Figure \ref{fig:implications}. Each pair of identities given on the left-hand side (respectively, right-hand side) of the graph jointly imply their common successor on the right-hand side (respectively, left-hand side). Note that these consequences are hidden in the special case of $e$-distributive residuated lattices addressed in \cite{BT2003}, where taken individually \ref{eq:ml} and \ref{eq:lj} are equivalent, as are \ref{eq:jr} and \ref{eq:rm}.

\begin{figure}
\begin{center}
\begin{tikzpicture}[scale=0.4]

\tikzset{vertex/.style = {shape=circle,draw,fill=white,inner sep=1.5pt, minimum size=2em}}
\tikzset{edge/.style = {->,> = latex'}}

\node[vertex] (a) at  (0,8) {$\join\ovr$};
\node[vertex] (b) at  (0,4) {$\meet\under$};
\node[vertex] (c) at  (0,0) {$\cdot\meet$};
\node[vertex] (d) at  (8,8) {$\under\join$};
\node[vertex] (e) at  (8,4) {$\ovr\meet$};
\node[vertex] (f) at  (8,0) {$\meet\cdot$};

\draw[edge] (4,7)  to (d);
\draw[edge] (4,3)  to (e);
\draw[edge] (4,-1) to (f);
\draw[edge] (4,9) to (a);
\draw[edge] (4,5) to (b);
\draw[edge] (4,1) to (c);
\draw (a) to (4,7);
\draw (b) to (4,7);
\draw (a) to (4,3);
\draw (c) to (4,3);
\draw (b) to (4,-1);
\draw (c) to (4,-1);
\draw (d) to (4,9);
\draw (e) to (4,9);
\draw (d) to (4,5);
\draw (f) to (4,5);
\draw (e) to (4,1);
\draw (f) to (4,1);
\end{tikzpicture}
\end{center}

\caption{Dependencies among the nontrivial distributive laws.}
\label{fig:implications}
\end{figure}
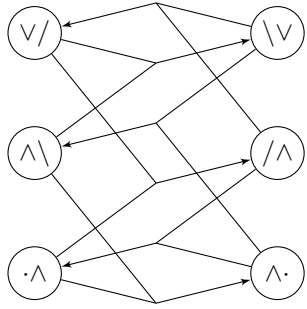

\section{The poset of subvarieties}\label{sec:countermodels}

The class of residuated binars with distributive lattice reducts forms a finitely-based variety $\sf RB$, and the implications announced in Theorem \ref{thm:algebraic implications} entail some inclusions among the subvarieties of $\sf RB$ determined by the nontrivial distributive laws. We will show that these are all of the inclusions among such subvarieties, completely describing the subposet of the subvariety lattice of $\sf RB$ whose elements are axiomatized (modulo the theory of $\sf RB$) by any collection of the nontrivial distributive laws. The same analysis holds for residuated semigroups as well.

\begin{figure}\label{fig:lattice reducts}
\begin{center}
\begin{tikzpicture}

\tikzset{vertex/.style = {shape=circle,draw,fill=white,inner sep=1.5pt}}
\tikzset{edge/.style = {-,> = latex'}}

\node[vertex,label=left:$\top$] (b) at  (0,-1) {};
\node[vertex,label=left:$a$] (c) at  (-1,-2) {};
\node[vertex,label=right:$b$] (d) at  (1,-2) {};
\node[vertex,label=left:$\bot$] (e) at (0,-3) {};

\draw[edge] (b) to (c);
\draw[edge] (b) to (d);
\draw[edge] (c) to (e);
\draw[edge] (d) to (e);
\end{tikzpicture}\hspace{0.25 in}
\begin{tikzpicture}

\tikzset{vertex/.style = {shape=circle,draw,fill=white,inner sep=1.5pt}}
\tikzset{edge/.style = {-,> = latex'}}

\node[vertex,label=left:$\top$] (a) at  (0,0) {};
\node[vertex,label=left:$c$] (b) at  (0,-1) {};
\node[vertex,label=left:$a$] (c) at  (-1,-2) {};
\node[vertex,label=right:$b$] (d) at  (1,-2) {};
\node[vertex,label=left:$\bot$] (e) at (0,-3) {};

\draw[edge] (a) to (b);
\draw[edge] (b) to (c);
\draw[edge] (b) to (d);
\draw[edge] (c) to (e);
\draw[edge] (d) to (e);
\end{tikzpicture}\hspace{0.25 in}
\begin{tikzpicture}

\tikzset{vertex/.style = {shape=circle,draw,fill=white,inner sep=1.5pt}}
\tikzset{edge/.style = {-,> = latex'}}

\node[vertex,label=left:$\top$] (b) at  (0,-1) {};
\node[vertex,label=left:$a$] (c) at  (-1,-2) {};
\node[vertex,label=right:$b$] (d) at  (1,-2) {};
\node[vertex,label=left:$c$] (e) at (0,-3) {};
\node[vertex,label=left:$\bot$] (f) at (0,-4) {};

\draw[edge] (b) to (c);
\draw[edge] (b) to (d);
\draw[edge] (c) to (e);
\draw[edge] (d) to (e);
\draw[edge] (e) to (f);
\end{tikzpicture}
\end{center}

\caption{Labeled Hasse diagrams for the lattice reducts of $\m A_1$, $\m A_2$, $\m A_3$ (left), $\m A_4$, $\m A_5$ (middle) and $\m A_6$ (right).}
\end{figure}
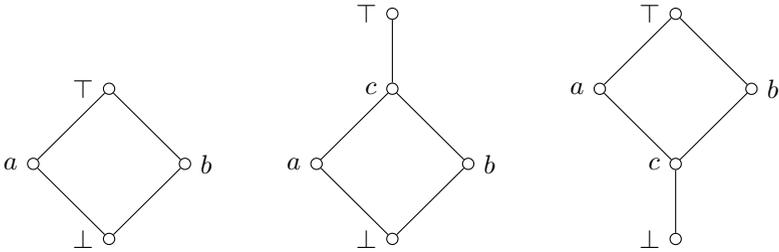

\begin{proposition}\label{prop:no other implications}
Theorem \ref{thm:algebraic implications} gives the only implications among the six nontrivial distributive laws modulo the theory of residuated binars. The same holds for residuated semigroups.
\end{proposition}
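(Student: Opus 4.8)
The plan is to establish the proposition by exhibiting countermodels, organised around the closure operator that Theorem~\ref{thm:algebraic implications} induces. Regard the six implications of that theorem as production rules on the six-element set $S=\{$\ref{eq:fm}, \ref{eq:mf}, \ref{eq:lj}, \ref{eq:jr}, \ref{eq:ml}, \ref{eq:rm}$\}$ of nontrivial distributive laws (for instance, $\{$\ref{eq:jr}, \ref{eq:ml}$\}$ produces \ref{eq:lj}), and let $\mathrm{cl}$ be the resulting closure operator on the powerset of $S$. Since each rule is a valid implication in ${\sf RB}$, every law in $\mathrm{cl}(\Sigma)$ is a consequence of $\Sigma$ over the theory of residuated binars; the proposition is exactly the converse, that $\Sigma\models\epsilon$ in ${\sf RB}$ forces $\epsilon\in\mathrm{cl}(\Sigma)$. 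Thus for every $\Sigma\subseteq S$ and every $\epsilon\in S\setminus\mathrm{cl}(\Sigma)$ I must produce an algebra in ${\sf RB}$ that satisfies all the laws in $\Sigma$ yet fails $\epsilon$.

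First I would compute the closed sets of $\mathrm{cl}$ directly. A finite check shows that no five-element subset is closed (for each $\epsilon$ the rule producing $\epsilon$ has both of its premises among the other five laws), and that the meet-irreducible closed sets are precisely the six four-element coatoms, obtained by deleting one of the pairs $\{$\ref{eq:fm}, \ref{eq:mf}$\}$, $\{$\ref{eq:fm}, \ref{eq:rm}$\}$, $\{$\ref{eq:mf}, \ref{eq:ml}$\}$, $\{$\ref{eq:lj}, \ref{eq:jr}$\}$, $\{$\ref{eq:lj}, \ref{eq:ml}$\}$, $\{$\ref{eq:jr}, \ref{eq:rm}$\}$. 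As in any finite lattice each closed set is the intersection of the meet-irreducibles above it, $\mathrm{cl}(\Sigma)$ is the intersection of the coatoms containing $\Sigma$; hence whenever $\epsilon\notin\mathrm{cl}(\Sigma)$ there is a coatom $M\supseteq\Sigma$ with $\epsilon\notin M$. Consequently it suffices to realise each of the six coatoms, i.e. to build for each coatom $M$ a residuated binar with distributive lattice reduct whose valid nontrivial distributive laws are exactly the four laws in $M$.

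Next I would cut the work down using mirror duality. Under the involution interchanging \ref{eq:fm} with \ref{eq:mf}, \ref{eq:lj} with \ref{eq:jr}, and \ref{eq:ml} with \ref{eq:rm}, two of the coatoms are self-dual and the remaining four split into two dual pairs. Since $\epsilon$ holds in a residuated binar $\m A$ if and only if $\epsilon^{\op}$ holds in its opposite $\m A^{\op}$ (which is again a member of ${\sf RB}$, with the same lattice reduct), realising a coatom $M$ by $\m A$ automatically realises $M^{\op}$ by $\m A^{\op}$, leaving only four essentially different algebras to construct. These are furnished by the algebras $\m A_1,\dots,\m A_6$ whose distributive lattice reducts are displayed in Figure~\ref{fig:lattice reducts}. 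For each I would specify the multiplication, check that it is residuated (equivalently, that $\cdot$ preserves binary joins in each coordinate, so that $\under$ and $\ovr$ are well defined), compute the residuals, and verify by finite inspection that the valid distributive laws are exactly the four prescribed by the associated coatom. To obtain the statement for residuated semigroups simultaneously, I would take each multiplication to be associative, so that the same algebras serve as countermodels in that subvariety.

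The main obstacle is this last construction: on these small distributive lattices one must find binary operations that are at once residuated, associative, and finely tuned so that precisely the four intended distributive laws hold while the other two fail. Producing such separating multiplications is the crux of the argument; once they are available, computing the residuals and checking the six identities is routine, and the closure-theoretic reduction of the second paragraph then yields the proposition, including the assertion for residuated semigroups.
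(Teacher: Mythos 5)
Your strategy coincides with the paper's: reduce the problem to realizing the six maximal proper subsets of $\{$\ref{eq:fm}, \ref{eq:mf}, \ref{eq:lj}, \ref{eq:jr}, \ref{eq:ml}, \ref{eq:rm}$\}$ that are closed under the implications of Theorem \ref{thm:algebraic implications}, and then exhibit one countermodel per such subset. Your identification of the six coatoms (equivalently, of the six two-element ``deleted'' pairs) is correct, and the closure-theoretic reduction is sound; indeed it makes explicit a point the paper states more tersely, namely that for each law $\epsilon$ and each premise $\epsilon_i$ of the unique rule producing $\epsilon$, the four-element set obtained by deleting $\{\epsilon,\epsilon_i\}$ is closed, so a model realizing it separates $\Sigma$ from $\epsilon$ whenever $\epsilon,\epsilon_i\notin\Sigma$. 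Your observation that mirror duality cuts the six required constructions down to four is also correct and is implicit in the paper's list (the models failing $\{$\ref{eq:jr}, \ref{eq:rm}$\}$ and $\{$\ref{eq:lj}, \ref{eq:ml}$\}$ are opposites of one another, as are those failing $\{$\ref{eq:ml}, \ref{eq:mf}$\}$ and $\{$\ref{eq:rm}, \ref{eq:fm}$\}$, while the two self-dual coatoms are realized by commutative algebras).

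There is, however, a genuine gap: you never produce the multiplications. The entire nontrivial content of the proposition is the existence of six residuated, associative binary operations on the lattices of Figure \ref{fig:lattice reducts} whose sets of valid nontrivial distributive laws are exactly the six prescribed four-element sets. You correctly identify this as the crux but leave it as an acknowledged obstacle, and existence is not automatic --- a priori some coatom might fail to be realized by any residuated binar, in which case additional implications beyond Theorem \ref{thm:algebraic implications} would hold and the proposition would be false as stated. The paper closes this gap by writing down explicit operation tables for $\cdot$ on each $\m A_i$ (the residuals being uniquely determined) and checking the required satisfaction and failure facts by direct computation. Until you supply such tables, or some other existence argument, what you have is a correct reduction rather than a proof.
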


\begin{proof}
For each $i\in\{1,2,3,4,5,6\}$ we define a residuated binar $\m A_i$. The lattice reducts of each $\m A_i$ is given in Figure \ref{fig:lattice reducts}. We provide operation tables for $\cdot$ in each $\m A_i$ below; the operation tables for $\under$ and $\ovr$ are uniquely determined by these in each case. For $\m A_1$, $\m A_2$, and $\m A_3$:
$$\begin{array}{c|cccc}
\cdot&\bot&a&b&\top\\\hline
\bot&\bot&\bot&\bot&\bot\\
a&\bot&\bot&\bot&\bot\\
b&\bot&\bot&\top&\top\\
\top&\bot&\bot&\top&\top\\
\end{array}
\qquad
\begin{array}{c|cccc}
\cdot&\bot&a&b&\top\\\hline
\bot&\bot&\bot&\bot&\bot\\
a&\bot&\bot&\bot&\bot\\
b&\bot&a&b&\top\\
\top&\bot&a&b&\top\\
\end{array}
\qquad
\begin{array}{c|cccc}
\cdot&\bot&a&b&\top\\\hline
\bot&\bot&\bot&\bot&\bot\\
a&\bot&\bot&a&a\\
b&\bot&\bot&b&b\\
\top&\bot&\bot&\top&\top\\
\end{array}$$
For $\m A_4$, $\m A_5$, and $\m A_6$:
$$\begin{array}{c|ccccc}
\cdot&\bot&a&b&c&\top\\\hline
\bot&\bot&\bot&\bot&\bot&\bot\\
a&\bot&\top&\bot&\top&\top\\
b&\bot&b&\bot&b&b\\
c&\bot&\top&\bot&\top&\top\\
\top&\bot&\top&\bot&\top&\top\\
\end{array}
\hspace{0.08 in}
\begin{array}{c|ccccc}
\cdot&\bot&a&b&c&\top\\\hline
\bot&\bot&\bot&\bot&\bot&\bot\\
a&\bot&\top&b&\top&\top\\
b&\bot&\bot&\bot&\bot&\bot\\
c&\bot&\top&b&\top&\top\\
\top&\bot&\top&b&\top&\top\\
\end{array}
\hspace{0.08 in}
\begin{array}{c|ccccc}
\cdot&\bot&a&b&c&\top\\\hline
\bot&\bot&\bot&\bot&\bot&\bot\\
a&\bot&a&\bot&\bot&a\\
b&\bot&\bot&b&\bot&b\\
c&\bot&\bot&\bot&\bot&\bot\\
\top&\bot&a&b&\bot&\top\\
\end{array}$$
Direct calculation verifies that:
\begin{itemize}
\item $\m A_1\models$ \ref{eq:rm}, \ref{eq:ml}, \ref{eq:mf}, \ref{eq:fm} and $\m A_1\not\models$ \ref{eq:lj}, \ref{eq:jr}.
\item $\m A_2\models$ \ref{eq:lj}, \ref{eq:ml}, \ref{eq:mf}, \ref{eq:fm} and $\m A_2\not\models$ \ref{eq:jr}, \ref{eq:rm}.
\item $\m A_3\models$ \ref{eq:jr}, \ref{eq:rm}, \ref{eq:mf}, \ref{eq:fm} and $\m A_3\not\models$ \ref{eq:lj}, \ref{eq:ml}.
\item $\m A_4\models$ \ref{eq:jr}, \ref{eq:lj}, \ref{eq:rm}, \ref{eq:fm} and $\m A_4\not\models$ \ref{eq:ml}, \ref{eq:mf}.
\item $\m A_5\models$ \ref{eq:jr}, \ref{eq:lj}, \ref{eq:ml}, \ref{eq:mf} and $\m A_5\not\models $ \ref{eq:rm}, \ref{eq:fm}.
\item $\m A_6\models$ \ref{eq:jr}, \ref{eq:lj}, \ref{eq:rm}, \ref{eq:ml} and $\m A_6\not\models $ \ref{eq:fm}, \ref{eq:mf}.
\end{itemize}
Let $\epsilon\in\{$\ref{eq:jr}, \ref{eq:lj}, \ref{eq:rm}, \ref{eq:ml}, \ref{eq:mf}, \ref{eq:fm}$\}$. Then there exists a unique implication listed in Theorem \ref{thm:algebraic implications} having $\epsilon$ as its consequent. Let $\epsilon_1,\epsilon_2$ be the identities in the antecedent of the aforementioned implication. Then the above countermodels show that if $\epsilon\notin\Sigma\subseteq\{$\ref{eq:jr}, \ref{eq:lj}, \ref{eq:rm}, \ref{eq:ml}, \ref{eq:mf}, \ref{eq:fm}$\}$ and $\epsilon_1\notin\Sigma$ or $\epsilon_2\notin\Sigma$, then $\epsilon$ is not entailed by $\Sigma$.

Note that each $\m A_i$, $i\in\{1,2,3,4,5,6\}$, is an associative residuated binar. The result therefore holds for residuated semigroups as well.
\end{proof}

The left-hand side of Figure \ref{fig:subvariety poset} gives the Hasse diagram of the poset of subvarieties of $\sf RB$ determined by the six nontrivial distributive laws. The coatoms in this diagram are subvarieties axiomatized modulo $\sf RB$ by a single nontrivial distributive law, and the atoms are subvarieties axiomatized by one of the four-element subsets of $\{$\ref{eq:jr}, \ref{eq:lj}, \ref{eq:rm}, \ref{eq:ml}, \ref{eq:mf}, \ref{eq:fm}$\}$ satisfied in one of the models $\m A_i$ given in the proof of Proposition \ref{prop:no other implications}. 
The meets in this diagram correspond to intersection of subvarieties, but in general the joins do not correspond to joins in the lattice of subvarieties.
The same diagram describes the corresponding subvariety poset for residuated semigroups since the models  ${\m A}_i$, $i\in\{1,2,3,4,5,6\}$, are associative.

When $\cdot$ is commutative in a residuated binar $\m A$, the two residuals satisfy $x\under y = y\ovr x$ for all $x,y\in A$ and therefore $\under$ and $\ovr$ coincide. In this event, \ref{eq:lj} is equivalent to \ref{eq:jr}, \ref{eq:ml} is equivalent to \ref{eq:rm}, and \ref{eq:fm} is equivalent to \ref{eq:mf}. The poset of subvarieties axiomatized by the three pairwise independent nontrivial distributive laws is pictured on the right-hand side of Figure \ref{fig:subvariety poset}. The correctness of this diagram can be verified by observing that the models ${\m A}_1$ and ${\m A}_6$ are commutative. Since they are also associative, the same diagram describes the subvariety poset for commutative residuated semigroups. 

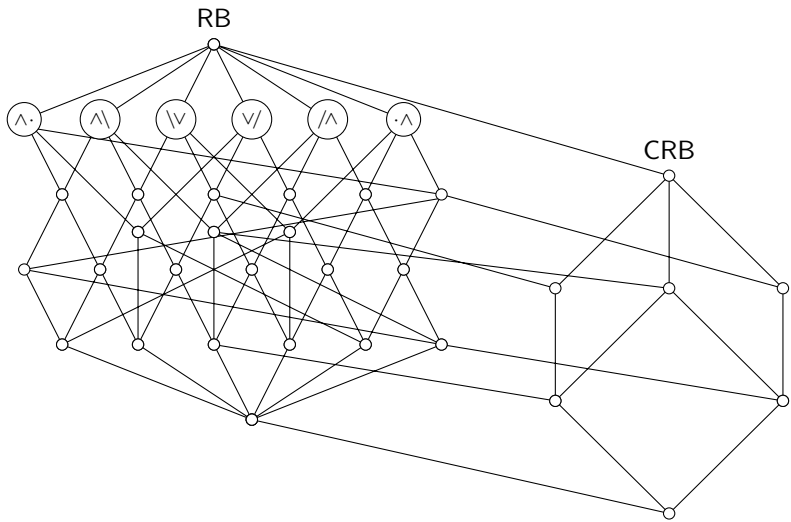
\begin{figure}\label{fig:subvariety poset}
\begin{center}
\begin{tikzpicture}[xscale=.5,  yscale = .5, 
every node/.style={circle, draw, fill=white, inner sep=1.5pt},
t/.style={rectangle,draw=white,fill=white,inner sep=0pt},
g/.style={draw,fill=white,inner sep=1.5pt}
]
\draw(6,0)node{}--(1,2)node{}--(0,4)node{}--(1,6)node{}--(2,8)
--(3,6)node{}--(4,4)node{}--(5,2)node{}--(6,4)node{}--(7,6)node{}
--(8,8)--(9,6)node{}--(10,4)node{}--(11,2)node{}--(6,0)node{}
--(3,2)node{}--(2,4)node{}--(1,6)node{}--(0,8)--(3,5)node{}
--(3,2)node{}--(4,4)node{}--(5,6)node{}--(6,8)--(7,6)node{}
--(8,4)node{}--(9,2)node{}--(10,4)node{}--(11,6)node{}--(10,8)
--(7,5)node{}--(7,2)node{}--(6,0)node{}--(5,2)node{}--(5,5)node{}
--(2,8)node{\scriptsize$\meet\!\under$}--(5,10)node{}--(4,8)--(3,6)node{}--(2,4)node{}
--(1,2)node{}--(7,5)node[g]{}--(4,8)node{\scriptsize$\under\!\join$}--(5,6)node{}--(6,4)node{}
--(7,2)node{}--(8,4)node{}--(9,6)node{}--(10,8)node{\scriptsize$\cdot\meet$}--(5,10)[]node{}
--(8,8)node{\scriptsize$\ovr\!\meet$}--(5,5)node[g]{}--(11,2)node{}(3,5)node{}--(9,2)node{}
--(6,0)node{}(3,5)node[g]{}--(6,8)node{\scriptsize$\join\!\ovr$}--(5,10)node{}--(0,8)node{\scriptsize$\meet\cdot$}
(11,2)--(20,0.5)node{}--(17,-2.5)node{}--(14,0.5)node{}--(14,3.5)node{}
--(17,6.5)node{}--(20,3.5)node{}--(20,0.5)node{}--(17,3.5)node{}--(14,0.5)node{}
(5,5)--(17,3.5)--(17,6.5)--(5,10)
(6,0)--(17,-2.5)
(5,2)--(14,0.5)
(11,6)--(20,3.5)
(5,6)--(14,3.5)
(0,4)--(11,5.9)
(0,7.8)--(11,6)
(0,4)--(11,2)
(5,10.7)node[t]{$\mathsf{RB}$}
(17,7.2)node[t]{$\mathsf{CRB}$};
\end{tikzpicture}
\end{center}

\caption{The poset of subvarieties determined by the nontrivial distributive laws in varieties of residuated binars ${\sf RB}$ and commutative residuated binars ${\sf CRB}$.}
\end{figure}

\section{Identity elements, complements, and prelinearity}\label{sec:additional properties}

We say that a residuated binar is \emph{complemented} if its lattice reduct is complemented, and \emph{Boolean} if its lattice reduct is a Boolean lattice. A unital residuated binar is called \emph{integral} if it satisfies the identity $x\leq e$, where $e$ is the multiplicative identity.\footnote{This usage of \emph{integral} is typical in the study of residuated lattices, and we caution that it conflicts with the common usage in the theory of relation algebras.}
Boolean (unital) residuated binars are called $(u)r$-algebras in \cite{JJR1995}.
Note that if $\cdot$ and $\meet$ coincide in a residuated binar $\m A$, then $\m A$ is term-equivalent to a Brouwerian algebra (i.e., to the bottom-free reduct of a Heyting algebra). If additionally $\m A$ is a Boolean residuated binar, then $\m A$ is (term-equivalent to) a Boolean algebra.

The presence of complements and an identity element in a residuated binar can have a profound impact on whether it satisfies any of the six non-trivial distributive laws, a stark example of which is illustrated by the following lemma.

\begin{lemma}\label{lem:integral implies Boolean}
Let ${\bf A}$ be a unital complemented residuated binar. If ${\bf A}$ is integral, then $\meet$ and $\cdot$ coincide.
\end{lemma}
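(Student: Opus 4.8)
The plan is to prove the two inequalities $xy \leq x\meet y$ and $x\meet y \leq xy$ separately, the first being routine and the second being where complementation does the real work. Since $\m A$ is integral, the multiplicative identity $e$ is in fact the top element $\top$ of the lattice, and the complementedness hypothesis guarantees that the lattice reduct is bounded, with a bottom $\bot$. I would first record the standard fact that $\cdot$ is isotone in both coordinates (immediate from residuation). Using isotonicity together with $x,y\leq e$ and unitality, one gets $xy \leq ey = y$ and $xy\leq xe = x$, whence $xy \leq x\meet y$. This disposes of the easy inclusion.

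The crux is the reverse inequality $x\meet y \leq xy$. The tempting route---writing $x = x(y\join y') = xy \join xy'$ for a complement $y'$ of $y$ and then intersecting with $y$---stalls, because the lattice reduct is only assumed complemented, not distributive, so $\meet$ need not distribute over $\join$. The idea that sidesteps this is to first prove that \emph{every element is idempotent}. Given any $a$ with complement $a'$, the law \ref{eq:fj} (which holds in all residuated binars) gives $a = ae = a(a\join a') = aa \join aa'$; and by the easy inclusion already established, $aa' \leq a\meet a' = \bot$, so $aa' = \bot$ and therefore $a = aa$. Crucially, this argument uses only \ref{eq:fj}, unitality, and complementation, and never invokes lattice distributivity.

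With idempotency in hand the reverse inequality is immediate: put $c = x\meet y$. Then $c$ is idempotent, and since $c\leq x$ and $c\leq y$, isotonicity of $\cdot$ yields $c = cc \leq xy$, i.e.\ $x\meet y \leq xy$. Combining the two inclusions gives $x\meet y = xy$, as desired. The main obstacle is precisely this reverse inequality, and the key insight is that one should not attempt to establish $x\meet y \leq xy$ directly through a distributivity computation, but should instead prove universal idempotency (which complementation furnishes without distributivity) and then apply it to the element $x\meet y$ itself.
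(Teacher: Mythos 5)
Your proposal is correct and follows essentially the same route as the paper: establish $xy\leq x\meet y$ from integrality, use $(\cdot\vee)$ together with $x\join x'=e$ and $xx'\leq\bot$ to get idempotency, and then apply idempotency to $x\meet y$ to obtain the reverse inequality. Your explicit remark that lattice distributivity is never needed is a fair observation, but the argument itself coincides with the paper's.
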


\begin{proof}
Since $\m A$ is integral, we have $x\cdot y \leq x\meet y$ for all $x,y\in A$. This implies for any $x\in A$ we have that $x\cdot x'\leq x\meet x' = \bot$, where $x'$ is a complement of $x$. On the other hand, since the identity element $e$ is the greatest element of ${\bf A}$ we have also that $x\join x'=e$ for any $x\in A$. Multiplying by $x$ and using \ref{eq:fj}, we obtain $x=x\cdot e = x\cdot(x\join x') = x^2 \join x\cdot x'= x^2\join\bot = x^2$. This gives that ${\bf A}$ is idempotent, whence for any $x,y\in A$, $x\meet y =(x\meet y)\cdot (x\meet y)\leq x\cdot y\leq x\meet y$, i.e., $x\cdot y = x\meet y$.
\end{proof}

Thus the only complemented integral residuated binars are Boolean algebras, which satisfy all six nontrivial distributive laws as well as lattice distributivity. Satisfaction of nontrivial distributive laws also often forces integrality in this setting.

\begin{lemma}\label{lem:integral}
Let $\m A$ be a unital residuated binar. If $e$ has a complement $e'$ and $\m A$ satisfies any one of the distributive laws \ref{eq:fm}, \ref{eq:mf}, \ref{eq:ml}, \ref{eq:rm}, then $\m A$ is integral.
\end{lemma}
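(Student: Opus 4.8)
The plan is to prove integrality in the form $e=\top$, which is equivalent to $x\leq e$ for all $x$ (take $x=\top$ for one direction, and $x\leq\top=e$ for the other); since $e\join e'=\top$, it suffices to show $e'=\bot$. I would organize all four cases around the single intermediate fact $e'\top=\bot$, because once this is established, isotonicity of $\cdot$ gives $e'=e'e\leq e'\top=\bot$. First I would record the identities that hold in every unital residuated binar purely by residuation: the bottom is absorbing, $\bot y=y\bot=\bot$ (since $\bot\leq z\ovr y$ for every $z$); the identity satisfies $e\under z=z$ and $z\ovr e=z$; and dually $\bot\under z=\top$ and $z\ovr\bot=\top$.

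Next I would treat the two ``left-slot'' hypotheses by a one-line substitution. For \ref{eq:mf}, instantiate $x=e$, $y=e'$, $z=\top$: the left side collapses to $(e\meet e')\top=\bot\cdot\top=\bot$, while the right side is $e\top\meet e'\top=\top\meet e'\top=e'\top$, so $e'\top=\bot$ as desired. For \ref{eq:ml}, instantiate $x=e$, $y=e'$, $z=\bot$: the left side is $(e\meet e')\under\bot=\bot\under\bot=\top$, and the right side is $e\under\bot\join e'\under\bot=\bot\join e'\under\bot=e'\under\bot$, so $e'\under\bot=\top$; by residuation this says exactly $e'\top\leq\bot$, again yielding $e'=\bot$.

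The remaining hypotheses \ref{eq:fm} and \ref{eq:rm} are the mirror-duals of \ref{eq:mf} and \ref{eq:ml} respectively, so I would obtain them by mirror duality rather than repeating the computation. This is legitimate because the ambient theory of unital residuated binars, the complement hypotheses $e\meet e'=\bot$ and $e\join e'=\top$, and the conclusion $x\leq e$ are all self-dual (using $e^{\op}=e$, together with $\top^{\op}=\top$ and $\bot^{\op}=\bot$). I do not expect a deep obstacle: the content is a short substitution once the residuation facts are in hand. The two points requiring care are (i) choosing the correct extremal value to substitute, namely $\top$ for the multiplicative laws and $\bot$ for the residual laws, so that the complement $e\meet e'=\bot$ is forced through the appropriate absorbing or identity clause; and (ii) confirming that the complement hypothesis is genuinely self-dual, which is what licenses using mirror duality to halve the work.
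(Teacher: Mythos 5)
Your proposal is correct and follows essentially the same route as the paper: reduce integrality to $e'=\bot$ via the absorbing/identity laws for $\bot$ and $e$, substitute the extremal elements into the hypothesized distributive law to get $e'\top=\bot$ (or $e'\under\bot=\top$), and dispatch the remaining two cases by mirror duality. The only cosmetic difference is that you compute the \ref{eq:mf} and \ref{eq:ml} cases explicitly where the paper computes \ref{eq:fm} and \ref{eq:ml}, which is immaterial since each pair is exchanged by the duality.
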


\begin{proof}
We prove the result for \ref{eq:fm} and \ref{eq:ml}. The result follows for \ref{eq:mf} and \ref{eq:rm} by a symmetric argument.

First, suppose that $\m A$ satisfies \ref{eq:fm}. Then:
\begin{align*}
e' &= e\cdot e'\\
&\leq \top\cdot e'\\
&= \top\cdot e' \meet \top\\
&= \top\cdot (e'\meet e)\\
&= \top\cdot\bot\\
&= \bot
\end{align*}
where the last equality uses the identity $x\cdot\bot=\bot$, which holds in all residuated binars. It follows that $e=e\join\bot=e\join e'=\top$, hence $e=\top$.

Second, suppose that $\m A$ satisfies \ref{eq:ml}. Note that:
\begin{align*}
\top &= \bot\under\bot\\
&= (e\meet e')\under \bot\\
&= (e\under\bot)\join (e'\under\bot)\\
&= \bot\join (e'\under\bot)\\
&= e'\under\bot,
\end{align*}
giving $\top\leq e'\under\bot$, and by residuation $e'\cdot\top\leq\bot$. As $e\leq\top$ and $\cdot$ is isotone, we get $e'\cdot e\leq e'\cdot\top\leq\bot$. Therefore $e'\leq\bot$, so $e'=\bot$. It follows that $e'=\bot$, yielding again $e=\top$ and completing the proof.
\end{proof}

Combining the previous two lemmas gives the following result.

\begin{corollary}\label{cor:complemented to Boolean}
Let $\m A$ be a complemented unital residuated binar. If $\m A$ satisfies any one of the distributive laws \ref{eq:fm}, \ref{eq:mf}, \ref{eq:ml}, \ref{eq:rm}, then $\m A$ is a Boolean algebra.
\end{corollary}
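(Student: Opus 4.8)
The plan is to chain the two preceding lemmas and then promote the resulting complemented lattice reduct to a Boolean one using the residuation structure. First I would observe that since $\m A$ is complemented, in particular the identity element $e$ possesses a complement $e'$, so the hypotheses of Lemma \ref{lem:integral} are satisfied. Whichever of the four distributive laws \ref{eq:fm}, \ref{eq:mf}, \ref{eq:ml}, \ref{eq:rm} is assumed, that lemma yields that $\m A$ is integral. Because $\m A$ is moreover unital and complemented, Lemma \ref{lem:integral implies Boolean} then applies and forces $\meet$ and $\cdot$ to coincide.

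It remains to upgrade the complemented lattice reduct to a genuinely Boolean one, i.e., to secure lattice distributivity (recall that, under the conventions of this section, ``complemented'' does not by itself include distributivity). For this I would exploit the fact that $\cdot=\meet$ is residuated: the law $x\meet y\leq z \iff y\leq x\under z$ exhibits, for each fixed $x$, the map $y\mapsto x\meet y$ as a lower adjoint, and lower adjoints preserve all existing joins. Applied to a binary join this gives $x\meet(y\join z)=(x\meet y)\join(x\meet z)$, so the lattice reduct is distributive. (Equivalently, one may simply invoke the remark preceding Lemma \ref{lem:integral implies Boolean}, which records that $\cdot=\meet$ makes $\m A$ term-equivalent to a Brouwerian algebra, and Brouwerian algebras are distributive.) A complemented distributive lattice is a Boolean lattice, so $\m A$ is a Boolean residuated binar with $\cdot=\meet$; the residuals $\under,\ovr$ are then the associated Boolean implications, and by the same remark $\m A$ is term-equivalent to a Boolean algebra.

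The only step that goes beyond a purely mechanical combination of the two lemmas is the passage from ``complemented'' to ``Boolean,'' and I expect this to be the main (though modest) obstacle. The single point requiring a little care is that the adjointness argument delivers distributivity over the binary joins that are present in any lattice, so no completeness assumption is needed; once distributivity is in hand, the conclusion follows immediately.
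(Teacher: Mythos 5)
Your proof is correct and follows the same route as the paper: complementedness gives $e$ a complement, Lemma \ref{lem:integral} yields integrality, and Lemma \ref{lem:integral implies Boolean} forces $\cdot=\meet$. Your explicit adjointness argument for lattice distributivity (hence Booleanness) fills in a step the paper's proof leaves implicit---it passes directly from $\cdot=\meet$ to ``Boolean algebra''---but the underlying reasoning is the same.
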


\begin{proof}
Since $\m A$ is complemented, $e$ has a complement. Lemma \ref{lem:integral} then gives that $\m A$ is integral, and so by Lemma \ref{lem:integral implies Boolean} it follows that $\m A$ is a Boolean algebra.
\end{proof}

\begin{lemma}
Let $\m A$ be a unital Boolean residuated binar. If $\m A$ satisfies any one of the distributive laws \ref{eq:fm}, \ref{eq:mf}, \ref{eq:lj}, \ref{eq:jr}, \ref{eq:ml}, or \ref{eq:rm}, then $\m A$ is integral, and hence is a Boolean algebra.
\end{lemma}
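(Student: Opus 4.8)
The plan is to split the six cases into two groups. For the four laws \ref{eq:fm}, \ref{eq:mf}, \ref{eq:ml}, and \ref{eq:rm} nothing new is required: a Boolean residuated binar is in particular complemented, so $e$ has a complement, and Lemma \ref{lem:integral} already yields integrality, after which Lemma \ref{lem:integral implies Boolean} (equivalently, Corollary \ref{cor:complemented to Boolean}) gives that $\m A$ is a Boolean algebra. The genuine work lies in the two remaining laws \ref{eq:lj} and \ref{eq:jr}, which are exactly the nontrivial distributive laws \emph{not} covered by Lemma \ref{lem:integral}. Since \ref{eq:lj}$^{\op}=$\ref{eq:jr}, and since both the defining axioms of unital Boolean residuated binars and the integrality equation $x\leq e$ are invariant under $(-)^{\op}$, mirror duality lets me treat only \ref{eq:lj} and obtain the case \ref{eq:jr} for free.

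So I would assume \ref{eq:lj} and write $e'$ for the complement of $e$ in the Boolean lattice reduct. First I would instantiate \ref{eq:lj} at $x=\top$, $y=e$, $z=e'$: since $e\join e'=\top$ and $w\under\top=\top$ for every $w$, this collapses to the single identity $\top=\top\under e\join\top\under e'$. Writing $u=\top\under e$ and $v=\top\under e'$, the key observation is that $u\leq e$ and $v\leq e'$. Each follows because $e$ is the identity and $e\leq\top$: indeed $u=e\cdot u\leq\top\cdot u\leq e$, where the final step is the residuation inequality $\top\cdot(\top\under e)\leq e$, and symmetrically $v=e\cdot v\leq\top\cdot v\leq e'$. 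Now I would exploit the Boolean structure: from $u\join v=\top$, together with $u\leq e$ and $v\leq e'$, lattice distributivity gives $e=e\meet(u\join v)=(e\meet u)\join(e\meet v)=u\join\bot=u$, using $e\meet u=u$ and $e\meet v\leq e\meet e'=\bot$. Hence $\top\under e=e$.

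With $\top\under e=e$ established, the residuation inequality yields $\top\cdot e=\top\cdot(\top\under e)\leq e$; but $e$ is the identity, so $\top\cdot e=\top$, whence $\top\leq e$ and $\m A$ is integral. Integrality together with complementedness then gives, via Lemma \ref{lem:integral implies Boolean}, that $\meet$ and $\cdot$ coincide, so $\m A$ is a Boolean algebra. The main obstacle is precisely the \ref{eq:lj}/\ref{eq:jr} case, where Lemma \ref{lem:integral} offers no help; the crux is choosing the instantiation $x=\top,\,y=e,\,z=e'$ of \ref{eq:lj} and then using complementation and distributivity to force the residual $\top\under e$ down to $e$, which in turn forces $\top=\top\cdot e\leq e$.
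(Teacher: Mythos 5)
Your proof is correct and follows essentially the same route as the paper: the four laws \ref{eq:fm}, \ref{eq:mf}, \ref{eq:ml}, \ref{eq:rm} are dispatched by Corollary \ref{cor:complemented to Boolean}, and the \ref{eq:lj} case hinges on the same instantiation $\top\under(e\join e')=\top\under e\join\top\under e'$ followed by residuation to get $\top\leq e$. The only (harmless) divergence is in the middle step: you derive $e=\top\under e$ from the bounds $\top\under e\leq e$, $\top\under e'\leq e'$ and lattice distributivity, whereas the paper derives $e\leq\top\under e$ by complementing $\top\under e\join e'=\top$ and using the residual of $\meet$ in the Boolean reduct.
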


\begin{proof}
Corollary \ref{cor:complemented to Boolean} settles the claim if $\m A$ satisfies any of \ref{eq:fm}, \ref{eq:mf}, \ref{eq:ml}, or \ref{eq:rm}. We therefore prove the claim for $\m A$ satisfying \ref{eq:lj}; it will follow if $\m A$ satisfies \ref{eq:jr} by a symmetric argument. Suppose that $\m A$ satisfies \ref{eq:lj}, and note that $e\leq\top$ implies $\top\under e'\leq e\under e'=e'$. By \ref{eq:lj} and the isotonicity of $\under$ in its numerator, we have:
\begin{align*}
\top &= \top\under\top\\
&= \top\under (e\join e')\\
&= \top\under e\join \top\under e'\\
&\leq \top\under e\join e'.\\
\end{align*}
Hence $\top\under e\join e' = \top$, so $(\top\under e)'\meet e =\bot$. Because $\meet$ has a residual $\to$ in any Boolean residuated binar, we get $e\leq(\top\under e)'\to \bot=(\top\under e)''=\top\under e$. By residuating with respect to $\cdot$, we obtain that $\top\leq e$, and hence $\top = e$.
\end{proof}

\begin{corollary}
In a unital Boolean residuated binar each of the identities \ref{eq:fm}, \ref{eq:mf}, \ref{eq:lj}, \ref{eq:jr}, \ref{eq:ml}, and \ref{eq:lm} is logically-equivalent to the other five.
\end{corollary}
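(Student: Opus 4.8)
The plan is to combine the preceding lemma with the elementary observation that every Boolean algebra, regarded as a unital residuated binar, satisfies all six nontrivial distributive laws. The preceding lemma already does the heavy lifting: if $\m A$ is a unital Boolean residuated binar satisfying any single one of \ref{eq:fm}, \ref{eq:mf}, \ref{eq:lj}, \ref{eq:jr}, \ref{eq:ml}, or \ref{eq:rm}, then $\m A$ is integral and hence term-equivalent to a Boolean algebra. Thus each of the six identities, taken in isolation, collapses $\m A$ onto the same target, and it remains only to check that every identity on the list then holds automatically.

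For this converse direction I would compute the residuals in a Boolean algebra. Since $\cdot$ coincides with $\meet$ there, residuation with respect to $\cdot$ gives $x\under y=y\ovr x=x'\join y$, the Boolean implication. Substituting these expressions into each of the six identities reduces every one of them to an identity of the underlying distributive lattice: for instance \ref{eq:ml} becomes $(x\meet y)'\join z=(x'\join z)\join(y'\join z)$, which is immediate from De Morgan's law and the idempotence of $\join$, whereas \ref{eq:fm} reduces to the idempotence and associativity of $\meet$. Carrying this out for all six shows that a Boolean algebra satisfies every nontrivial distributive law.

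Putting the two directions together yields the corollary: if $\m A$ is a unital Boolean residuated binar satisfying any one of the six identities, then by the preceding lemma it is a Boolean algebra and therefore satisfies all six, so each identity entails all the others modulo the theory of unital Boolean residuated binars. I expect the only real computation to be the residual identity $x\under y=y\ovr x=x'\join y$; once this is in hand the six verifications are routine reductions to lattice identities, and the preceding lemma supplies the genuinely nontrivial implication that a single law forces integrality.
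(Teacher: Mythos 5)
Your argument is correct and is exactly the one the paper leaves implicit: the preceding lemma collapses any unital Boolean residuated binar satisfying one of these laws to a Boolean algebra, where $x\under y=y\ovr x=x'\join y$ turns all six laws into routine lattice identities. (Note that the corollary as printed lists $(\under\meet)$, which holds in every residuated binar and is surely a typo for $(\ovr\meet)$; you have read it the intended way.)
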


The two prelinearity equations \ref{eq:lp} and \ref{eq:rp} are not expressible in the absence of a multiplicative identity $e$, but for unital residuated binars they enjoy a connection to the nontrivial distributive laws even in the absence of associativity. In particular, inspection of the proofs offered in \cite{BT2003} verifies that in a unital residuated binar satisfying
$$(x\join y)\meet e = (x\meet e)\join (y\meet e),$$
each of \ref{eq:rm} and \ref{eq:jr} implies \ref{eq:lp}, and each of \ref{eq:ml} and \ref{eq:lj} implies \ref{eq:rp}. Without associativity, the converse implications fail. To see this, we may define a five-element residuated binar $\m A_7$ whose lattice reduct is pictured in Figure \ref{fig:prelinearity}. The multiplication $\cdot$ on $\m A_7$ is given in the following table:
$$\begin{array}{c|ccccc}
\cdot&\bot&a&b&e&\top\\\hline
\bot&\bot&\bot&\bot&\bot&\bot\\
a&\bot&a&\bot&a&e\\
b&\bot&\bot&b&b&\top\\
e&\bot&a&b&e&\top\\
\top&\bot&a&\top&\top&\top\\
\end{array}$$
The residuals $\under$ and $\ovr$ are determined uniquely by the above table as well, and with these operations we have $\m A_7\models$ \ref{eq:lp}, \ref{eq:rp}, but each of \ref{eq:rm}, \ref{eq:jr}, \ref{eq:ml}, and \ref{eq:lj} fail in $\m A_7$. Note also that $\m A_7\not\models$ \ref{eq:fm},\ref{eq:mf}, whence prelinearity does not entail either of the latter distributive laws.

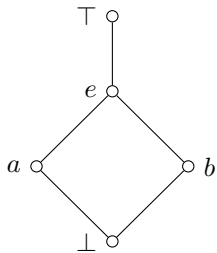
\begin{figure}
\begin{center}
\begin{tikzpicture}

\tikzset{vertex/.style = {shape=circle,draw,fill=white,inner sep=1.5pt}}
\tikzset{edge/.style = {-,> = latex'}}

\node[vertex,label=left:$\top$] (a) at  (0,0) {};
\node[vertex,label=left:$e$] (b) at  (0,-1) {};
\node[vertex,label=left:$a$] (c) at  (-1,-2) {};
\node[vertex,label=right:$b$] (d) at  (1,-2) {};
\node[vertex,label=left:$\bot$] (e) at (0,-3) {};

\draw[edge] (a) to (b);
\draw[edge] (b) to (c);
\draw[edge] (b) to (d);
\draw[edge] (c) to (e);
\draw[edge] (d) to (e);
\end{tikzpicture}
\end{center}

\caption{Hasse diagram for the lattice reduct of $\m A_7$.}
\label{fig:prelinearity}
\end{figure}

\section{Open problems}\label{sec:open problems}

Lattice distributivity is a key ingredient in the known proofs of Theorem \ref{thm:algebraic implications}, whether purely algebraic or by equivalent frame conditions. We do not know whether any of the implications announced hold in all residuated binars (without assuming lattice distributivity), nor do we know whether any of these implications fail in this more general setting.

When present, a multiplicative identity element plays a decisive role in shaping the connection between the nontrivial distributive laws. Known characterizations of when a residuated binar may be embedded in a unital residuated binar crucially involve terms of the form $x\under x$ and $x\ovr x$ (see \cite{B1999,JJR1995}), and we conjecture that conditions involving terms of this form may provide a more satisfying account of the role of a multiplicative identity in this context. In particular, it would be interesting to identify analogues of prelinearity in the non-unital setting and explicate their connection to the nontrivial distributive laws and semilinearity.


\bibliographystyle{plain}

\end{document}